\DeclareMathOperator*{\argmax}{argmax}
\newcommand{\bq}{\begin{equation}}
\newcommand{\eq}{\end{equation}}
\newcommand{\R}{\mathbb{R}}
\newcommand{\abs}[1]{\left\vert#1\right\vert}
\newcommand{\G}{\mathcal{G}}
\newcommand{\Sf}{\mathcal{S}}
\newcommand{\MA}{Monge-Amp\`ere\xspace}
\algnewcommand{\LineComment}[1]{\State \(\triangleright\) #1}
\newtheorem{theorem}{Theorem}
\theoremstyle{lemma}
\newtheorem{lemma}[theorem]{Lemma}
\newtheorem{corollary}[theorem]{Corollary}
\newtheorem{definition}[theorem]{Definition}
\newtheorem{remark}[theorem]{Remark}
\theoremstyle{remark}
\newcommand\appendix@section[1]{%
\refstepcounter{section}%
\orig@section*{Appendix \@Alph\c@section: #1}%
}
\let\orig@section\section
\g@addto@macro\appendix{\let\section\appendix@section}
\begin{document}

\title[Strong Comparison Principle for Monge-Amp\`ere]{A strong comparison principle for the generalized Dirichlet problem for Monge-Amp\`ere}

\author{Brittany Froese Hamfeldt}
\thanks{This work was partially supported by NSF DMS-1751996.}
\address{Department of Mathematical Sciences, New Jersey Institute of Technology, University Heights, Newark, NJ 07102}
\email{bdfroese@njit.edu}

\begin{abstract}
We prove a strong form of the comparison principle for the elliptic \MA equation, with a Dirichlet boundary condition interpreted in the viscosity sense.  This comparison principle is valid when the equation admits a Lipschitz continuous weak solution.  The result is tight, as demonstrated by examples in which the strong comparison principle fails in the absence of Lipschitz continuity.  This form of comparison principle closes a significant gap in the convergence analysis of many existing numerical methods for the \MA equation.  An important corollary is that any consistent, monotone, stable approximation of the Dirichlet problem for the \MA equation will converge to the viscosity solution.
\end{abstract}

\date{\today}    
\maketitle

\section{Introduction}
The elliptic \MA equation
\bq\label{eq:MA}
\begin{cases}
\det(D^2u(x)) = f(x,u(x),\nabla u(x))\\
u \text{ is convex}
\end{cases}
\eq
is a fully nonlinear elliptic partial differential equation (PDE) that is related to important problems in optical design~\cite{guan_MAoptics}, surface evolution~\cite{OsherSethian}, image processing~\cite{Sapiro}, and optimal transport~\cite{Villani}.

Even in the simple setting of Dirichlet boundary conditions, the equation need not have a classical smooth solution.  A powerful framework for interpreting weak solutions is the notion of the viscosity solution~\cite{CIL}.  Like many elliptic equations, under mild assumptions the \MA equation has a comparison principle: if $u$ is a subsolution and $v$ a supersolution then $u-v$ attains its maximum on the boundary of the domain.  This type of comparison principle is a critical component of many existence and uniqueness results.

Because of its importance in applications, the last several years of seen a surge of interest in the numerical solution of the \MA equation.  A key breakthrough in analysing these numerical methods was provided by Barles and Souganidis in~\cite{BSnum}, who provided conditions that guarantee the convergence of an approximation scheme to the viscosity solution of the underlying PDE.  This result has inspired the development of many new numerical methods for fully nonlinear elliptic equations~\cite{FO_MATheory,feng2021narrow,mirebeau2016minimal,oberman2005convergent,WanMA,oberman2013finite,ObermanWS,FOFiltered,benamou2016monotone,feng2017convergent,Hamfeldt_gaussian,Nochetto_MAConverge,LiSalgado}.

However, one of the key conditions required by the Barles and Souganidis convergence framework has received very little attention to date: the proof assumes that the underlying PDE satisfies a strong form of a comparison principle.  Briefly, this involves interpreting the boundary conditions in a weak (non-classical) sense, and involves a very strong requirement that subsolutions always lie below supersolutions. Unfortunately, this form of strong comparison principle has never been established for the \MA equation~\eqref{eq:MA} or any other fully nonlinear PDE.  In fact, there are several settings where this form of comparison principle is known to fail for the \MA equation~\cite{Hamfeldt_gaussian,jensensmears}.  

A few recent numerical methods have circumvented this issue by providing alternative method-specific proofs of convergence~\cite{feng2017convergent,Hamfeldt_gaussian,Nochetto_MAConverge,LiSalgado}.  Many other existing methods come equipped with an incomplete convergence proof that relies on the unproven assumption of a strong comparison principle~\cite{FO_MATheory,feng2021narrow,mirebeau2016minimal,oberman2005convergent,WanMA,oberman2013finite,ObermanWS,FOFiltered,benamou2016monotone}.

In this article, we provide the first proof of a strong comparison principle for the \MA equation, which holds as long as the Dirichlet problem admits a Lipschitz continuous solution.  This result appears to be tight: we give an example where the strong comparison fails in the absence of this level of regularity.  This result closes a gap in the convergence proof for many existing numerical methods for the \MA equation.  This also opens the door to much simpler convergence analysis in the ongoing development of numerical methods.

This paper is organised as follows.  In section~\ref{sec:background}, we provide important background information on the \MA equation and its numerical approximation, which highlights the critical gap created by the lack of any strong comparison principle in the existing literature.  In section~\ref{sec:failure}, we describe two specific examples for which the \MA equation fails to have a strong comparison principle due to insufficient regularity in the problem and solution data.  In section~\ref{sec:proof}, we state and prove our main theorem on the strong comparison principle, together with a corollary that closes a long-outstanding gap in the analysis of numerical methods for \MA.

\section{Background}\label{sec:background}
\subsection{Viscosity solutions}
The \MA equation has the form 
\bq\label{eq:PDE} F(x,u,\nabla u(x), D^2u(x)) = 0, \quad x\in\Omega\subset\R^n.\eq
When $u$ is restricted to be convex, \MA
belongs to the class of second-order fully nonlinear degenerate elliptic partial differential equations.
\begin{definition}[Degenerate elliptic]\label{def:elliptic}
The operator
$F:\Omega\times\R\times\R^n\times\Sf^n\to\R$
is \emph{degenerate elliptic} if 
\[ F(x,u,p,X) \leq F(x,v,p,Y) \]
whenever $u \leq v$ and $X \geq Y$.
\end{definition}

The convexity constraint can also be absorbed into the equation by creating a globally elliptic extension of the PDE operator onto non-convex functions.  Different extensions are possible\cite{WanMA,Hamfeldt_gaussian,IshiiLions}, but are generally equivalent to an expression of the form
\bq\label{eq:MAconvex}
F(x,u,p,X) = 
-{\det}^+(X) + f(x,u,p), x \in \Omega
\eq
where the modified determinant satisfies
\bq\label{eq:detPlus}
{\det}^+(X) = \begin{cases} \det(X), & X \geq 0\\ <0, & \text{otherwise}. \end{cases}
\eq

In general, fully nonlinear elliptic equations such as the \MA equation need not have classical solutions.  A very powerful approach to interpreting weak solutions for this class of equations is the viscosity solution~\cite{CIL}.  This notion of weak solution tests whether upper (lower) semi-continuous functions are subsolutions (supersolutions) of the PDE via a maximum principle argument that moves derivatives onto smooth test functions.

\begin{definition}[Upper and Lower Semi-Continuous Envelopes]\label{def:envelope}
The \emph{upper and lower semi-continuous envelopes} of a function $u(x)$ are defined, respectively, by
\[ u^*(x) = \limsup_{y\to x}u(y), \]
\[ u_*(x) = \liminf_{y\to x}u(y). \]
\end{definition}

\begin{definition}[Viscosity subsolution (supersolution)]\label{def:subsuper}
An upper (lower) semi-continuous function $u$ is a \emph{viscosity subsolution (supersolution)} of~\eqref{eq:PDE} if for every $\phi\in C^2({\Omega})$, whenever $u-\phi$ has a local maximum (minimum)  at $x \in {\Omega}$, then
\[ 
F_*^{(*)}(x,u(x),\nabla\phi(x),D^2\phi(x)) \leq (\geq)  0 .
\]
\end{definition}
\begin{definition}[Viscosity solution]\label{def:viscosity}
A function $u$ is a \emph{viscosity solution} of~\eqref{eq:PDE} if $u^*$ is a subsolution and $u_*$ a supersolution.
\end{definition}

Many existence and regularity results are now available for viscosity solutions of the Dirichlet problem for the \MA equation~\cite{BardiMannucci,CafNirSpruck,Lions_Remarks,TrudingerWang_MAReview}.  In this work, we will be particularly interested in Lipschitz continuous viscosity solutions, which can be guaranteed under mild assumptions on the problem data.

\begin{theorem}[Existence of Lipschitz solutions~{\cite[Theorem~1.1]{Yazhe_MA}}]\label{thm:lipExist}
Suppose that $\Omega\subset\R^n$ is uniformly convex with $\partial\Omega\in C^{3,1}$ and that there exists an extension $\tilde{g}\in C^{1,1}(\Omega^\epsilon)$ of the Dirichlet data $g$ onto a neighbourhood $\Omega^\epsilon$ of $\partial\Omega$.  Suppose further that $f^{1/n}\in C^1(\bar{\Omega}\times\R\times\R^n)$ is non-negative and non-decreasing in its second argument and satisfies the bound
\bq\label{eq:gradbounds}f(x,\tilde{g}(x),p) \leq \mu \text{dist}(x,\partial\Omega)^\beta (1+\abs{p}^2)^{\alpha/2}, \quad x\in\Omega^\epsilon, p \in \R^n\eq
for some constants $\mu,\alpha \geq 0$ and $\beta \geq \max\{\alpha-n-1,0\}$.  Finally, we suppose that there exists functions $h\in C^1(\Omega), r \in L^1_{loc}(\R^n)$ and constants $N, \lambda$ such that
\bq\label{eq:fBounds}
f(x,N,p) \leq h(x)/r(p), \quad x \in \Omega, p \in \R^n
\eq
where
\bq\label{eq:exist}
\int_\Omega h(x)\,dx < \int_{\R^n} r(p)\,dp, \quad r^{-1}(p) \geq \lambda>0 \text{ for every }p \in \R^n.
\eq
Then the Dirichlet problem for the \MA equation~\eqref{eq:MA} has a unique convex viscosity solution $u\in C^{0,1}(\bar{\Omega})$.
\end{theorem}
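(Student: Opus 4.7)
My plan is to use Perron's method, realizing the solution as the supremum of all bounded viscosity subsolutions that lie beneath a fixed global supersolution. This reduces the theorem to three ingredients: (i) an a priori $L^\infty$ bound derived from the integrability condition~\eqref{eq:exist}; (ii) pointwise sub- and super-barriers at every boundary point $x_0 \in \partial\Omega$ that match the data $g$; and (iii) the comparison principle for convex viscosity solutions of the \MA operator~\eqref{eq:MAconvex}, which yields both monotonicity of the Perron envelope under the procedure and uniqueness at the end.

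\textbf{Barriers.} The upper barrier at $x_0$ is built from the $C^{1,1}$ extension $\tilde g$. Its distributional Hessian is bounded, and the growth hypothesis~\eqref{eq:gradbounds} is precisely tuned so that, after subtracting a small multiple of $\dist(x,\partial\Omega)$, the resulting function satisfies $\det D^2(\cdot) \leq f(x,\cdot,\nabla(\cdot))$ in a tubular neighbourhood of $\partial\Omega$. The lower barrier uses the uniform convexity of $\Omega$: at $x_0$, one fixes an enclosing ball $B_R(y_0)\supset\Omega$ tangent to $\partial\Omega$ at $x_0$, and considers a convex competitor of the form $\tilde g(x) - C\bigl(R^2 - \abs{x - y_0}^2\bigr)^{1/2}$ (or an analogous radial construction) with $C$ chosen large. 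One checks that this is a subsolution of the extended operator and matches $g(x_0)$ with a controlled Lipschitz modulus, using that $f^{1/n}$ is continuous in $(x,u,p)$ and that $\partial\Omega\in C^{3,1}$.

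\textbf{Global bound and Lipschitz regularity.} For the $L^\infty$ bound, let $u$ be any candidate subsolution lying above the lower barrier but exceeding $N$ somewhere; convexity and condition~\eqref{eq:fBounds} give $\det D^2 u \leq h(x)/r(\nabla u)$, and the classical Aleksandrov--Bakel'man change of variables $p=\nabla u$ yields
\[
\int_{\nabla u(\Omega)} r(p)\,dp \;\leq\; \int_\Omega h(x)\,dx,
\]
which combined with~\eqref{eq:exist} forces $\nabla u(\Omega)$ to be bounded, hence so is $u$. Interior Lipschitz continuity of the Perron solution is then automatic from convexity and boundedness, while boundary Lipschitz continuity is inherited by sandwiching $u$ between the two barriers at each $x_0$, both of which have a common linear modulus. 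Uniqueness follows from the viscosity comparison principle once the nondecreasing dependence of $f^{1/n}$ on its second argument is invoked.

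\textbf{Main obstacle.} The delicate step is the simultaneous calibration of the two barriers: each must interpolate the boundary datum to first order while absorbing the prescribed boundary behaviour of $f$. This is exactly where the hypothesis $\beta \geq \max\{\alpha - n - 1, 0\}$ is used --- the exponent balances the blow-up rate of $(1+\abs{p}^2)^{\alpha/2}$ against the vanishing $\dist(x,\partial\Omega)^\beta$, so that a single radial test function is both a valid subsolution near $\partial\Omega$ and retains a bounded gradient. Degrading this condition breaks either the subsolution property (so Perron fails to produce a solution with the correct boundary values) or the gradient control (so global Lipschitz regularity is lost).
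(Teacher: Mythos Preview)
The paper does not prove this theorem. It is quoted verbatim as \cite[Theorem~1.1]{Yazhe_MA} and used purely as background: the authors need only the \emph{conclusion} that a Lipschitz viscosity solution exists, in order to feed it into their strong comparison principle (Theorem~\ref{thm:strongcomparison}). No argument for Theorem~\ref{thm:lipExist} appears anywhere in the paper, so there is nothing to compare your proposal against.

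That said, your outline is broadly the standard route to results of this type and is consistent with how such theorems are proved in the cited literature: Perron's method driven by boundary barriers, an Aleksandrov--Bakel'man mass-transport estimate to get the $L^\infty$ (and gradient) bound from~\eqref{eq:fBounds}--\eqref{eq:exist}, and comparison for uniqueness. Your identification of the role of the exponent condition $\beta \geq \max\{\alpha-n-1,0\}$ is on target. One point to be careful about if you actually carry this out: your description of the \emph{upper} barrier has the inequality reversed --- a supersolution near $\partial\Omega$ needs $\det D^2(\cdot)\geq f$, not $\leq f$, and the construction typically adds (rather than subtracts) a multiple of a boundary-defining function to $\tilde g$. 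Likewise the radial competitor you write for the lower barrier is concave as written; the standard subsolution barrier on a uniformly convex domain is of the form $\tilde g(x) + C\,\psi(x)$ with $\psi$ a convex defining function for $\Omega$ (e.g.\ $\psi(x)=\abs{x-y_0}^2-R^2$). These are sign slips rather than conceptual errors, but they would need to be corrected in an actual proof.
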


\begin{remark}\label{rem:lipschitz}
There are several variations of Theorem~\ref{thm:lipExist} that also guarantee the existence of a Lipschitz continuous viscosity solution; see for example~\cite[Lemma~3.1]{Urbas_MA}, ~\cite[Theorem~1.1]{Martino}, and~\cite[Theorem~2]{Caff_regMA}. 
\end{remark}

\begin{remark}
We remark that the condition~\eqref{eq:exist} represents the usual compatibility condition required for existence in the special case that $f(x,u,p) = h(x)/r(p)$.  This holds automatically when the \MA equation has no dependence on $\nabla u$.  The restriction~\eqref{eq:gradbounds} places some restrictions on the interplay between the strength of the gradient terms and the rate of decay of the right-hand side near the boundary.
\end{remark}

A particularly nice property of many elliptic equations, which immediately yields uniqueness, is a comparison principle.  The \MA equation also satisfies a comparison principle under very general assumptions.

\begin{theorem}[Comparison principle~{\cite[Theorem~V.2]{IshiiLions}}]\label{thm:comparison}
Suppose that $\Omega\subset\R^n$ is open and uniformly convex. Suppose further that $f\in C^0(\bar{\Omega}\times\R\times\R^n)$ is non-negative, non-decreasing in its second argument, and that for every $R>0$ there exists a constant $C_R  \geq 0$ such that
\[ \abs{f^{1/n}(x,u,p)-f^{1/n}(x,u,q)} \leq C_R\abs{p-q} \]
for every $x\in\bar{\Omega}$, $\abs{u} \leq R$, and $\abs{p},\abs{q} \leq R$.  Let $u$ be any bounded upper semi-continuous viscosity subsolution of~\eqref{eq:MAconvex} and $v$ be any bounded lower semi-continuous viscosity supersolution of~\eqref{eq:MAconvex}.  Then
\bq\label{eq:comparison}
\max\limits_{x\in\bar{\Omega}}\{u(x)-v(x)\} \leq \max\limits_{x\in\partial\Omega}\{u(x)-v(x)\}^+.
\eq
\end{theorem}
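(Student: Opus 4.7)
The plan is to argue by contradiction using the standard doubling-of-variables technique from the Crandall-Ishii-Lions user's guide~\cite{CIL}, exploiting the concavity of ${\det}^{1/n}$ on the positive semidefinite cone to handle the degenerate elliptic structure of Monge-Amp\`ere.

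Assume the conclusion fails, so $u - v$ attains its maximum over $\bar{\Omega}$ at an interior point $x_0 \in \Omega$ with value strictly exceeding $M_\partial := \max_{\partial\Omega}(u-v)^+$. The critical first move is to replace $u$ by a \emph{strict} subsolution $u_\delta$, meaning a viscosity subsolution of $-{\det}^+(D^2 u_\delta) + f(x, u_\delta, \nabla u_\delta) \leq -\eta$ for some $\eta>0$, while preserving $\max(u_\delta - v) > M_\partial$. A natural candidate is $u_\delta := (1-\delta)u$, exploiting the homogeneity ${\det}(D^2 u_\delta) = (1-\delta)^n {\det}(D^2 u)$; alternatively $u_\delta := u + \delta w$ for $w$ a smooth strongly convex function, exploiting the Minkowski-type inequality ${\det}(A+B)^{1/n} \geq {\det}(A)^{1/n} + {\det}(B)^{1/n}$ on $\Sf^n_+$. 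The Lipschitz-in-$p$ hypothesis on $f^{1/n}$ is used to absorb the extra gradient terms generated by either perturbation.

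Next, for large $\alpha > 0$ consider $\Phi_\alpha(x,y) = u_\delta(x) - v(y) - \tfrac{\alpha}{2}|x-y|^2$ on $\bar\Omega\times\bar\Omega$, let $(x_\alpha, y_\alpha)$ attain its maximum, and use standard penalisation estimates to show $(x_\alpha, y_\alpha)\in\Omega\times\Omega$ for $\alpha$ large, with $\alpha|x_\alpha-y_\alpha|^2 \to 0$ and $x_\alpha, y_\alpha \to x_*$ along a subsequence. The Crandall-Ishii theorem of sums then produces $X \leq Y$ in $\Sf^n$ with $(p_\alpha, X) \in \bar{J}^{2,+} u_\delta(x_\alpha)$ and $(p_\alpha, Y) \in \bar{J}^{2,-} v(y_\alpha)$, where $p_\alpha := \alpha(x_\alpha - y_\alpha)$. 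The strict subsolution inequality yields ${\det}^+(X) \geq f(x_\alpha, u_\delta(x_\alpha), p_\alpha) + \eta \geq 0$, which forces $X \geq 0$ by the definition~\eqref{eq:detPlus}; since $X \leq Y$, also $Y \geq 0$, and monotonicity of $\det$ on $\Sf^n_+$ gives $\det(X) \leq \det(Y)$. Chaining with the supersolution inequality produces
\[
f(y_\alpha, v(y_\alpha), p_\alpha) - f(x_\alpha, u_\delta(x_\alpha), p_\alpha) \geq \eta
\]
for all large $\alpha$.

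The expected main obstacle (beyond the strict-subsolution reduction, which itself is delicate because the monotonicity of $f$ in $u$ works against the perturbation) is controlling $p_\alpha$, since $|p_\alpha| = \alpha|x_\alpha-y_\alpha|$ need not stay bounded from the penalisation alone. The remedy rests on a structural observation: every viscosity subsolution of $-{\det}^+ + f$ with $f\geq 0$ is automatically convex (any $C^2$ test function touching from above at $x$ must have $D^2\phi(x)\geq 0$, otherwise ${\det}^+$ is negative while $f$ is nonnegative), so $u_\delta$ is locally Lipschitz near $x_*$, uniformly bounding $p_\alpha$. Passing to a subsequence, $(x_\alpha, y_\alpha, p_\alpha)\to(x_*, x_*, p_*)$; the local Lipschitz continuity of $f^{1/n}$ in $p$ and continuity of $f$ then permit passage to the limit, giving $f(x_*, v(x_*), p_*) \geq f(x_*, u_\delta(x_*), p_*) + \eta$. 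But $u_\delta(x_*) \geq v(x_*)$ by the maximality of the doubled functional and $f$ is non-decreasing in its second argument, so the left-hand side is $\leq 0$, contradicting $\eta>0$.
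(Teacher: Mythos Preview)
The paper does not give its own proof of this statement: Theorem~\ref{thm:comparison} is quoted in the background section as a known result, attributed to Ishii--Lions~\cite[Theorem~V.2]{IshiiLions}, and is then used as a black box in the proof of the paper's main Theorem~\ref{thm:strongcomparison}. There is thus nothing in the paper to compare your argument against.

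For what it is worth, your sketch is a faithful outline of the doubling-of-variables proof that underlies the cited Ishii--Lions result: the strict-subsolution perturbation via the Minkowski inequality ${\det}(A+B)^{1/n}\geq{\det}(A)^{1/n}+{\det}(B)^{1/n}$ on $\Sf^n_+$, the theorem of sums to produce ordered jets $X\leq Y$, the observation that $X\geq 0$ (hence $Y\geq 0$ and $\det X\leq\det Y$) follows from ${\det}^+(X)\geq 0$, and the use of convexity of subsolutions to keep $p_\alpha$ bounded are exactly the ingredients Ishii--Lions assemble. The two points you flag as delicate are genuine: for the perturbation, take $w$ strongly convex with $w\leq 0$ on $\bar\Omega$ (e.g.\ $w(x)=\tfrac12|x-x_c|^2-\tfrac12R^2$ for a ball $B(x_c,R)\supset\Omega$) so that $u_\delta\leq u$ and the monotonicity of $f$ in its second argument works in your favour; and the local Lipschitz bound on $f^{1/n}$ only for $|p|\leq R$ is indeed sufficient once convexity of $u$ bounds the superdifferential (hence $\nabla\phi$ for any upper test function) on compact subsets of $\Omega$. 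Your final line should read that monotonicity gives $f(x_*,v(x_*),p_*)\leq f(x_*,u_\delta(x_*),p_*)$, hence $0\geq\eta$, rather than ``the left-hand side is $\leq 0$''.
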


\subsection{Numerical approximation of elliptic equations}
In this work, we are particularly motivated by the design and analysis of numerical methods for solving fully nonlinear elliptic equations such as the \MA equation. A key breakthrough in this area was provided by Barles and Souganidis~\cite{BSnum}, who demonstrated that a consistent, monotone, and stable approximation scheme will converge to the viscosity solution of the underlying PDE \emph{provided the equation satisfies a strong form of the comparison principle}.

The analysis provided by~\cite{BSnum} and extended by~\cite{oberman2006convergent} applies to finite difference approximations of the form $F^h(x,u(x),u(x)-u(\cdot))$, where $h$ is a small parameter typically related to the resolution of an underlying mesh.

\begin{definition}[Consistency]\label{def:consistency}
The scheme $F^h(x,u(x),u(x)-u(\cdot))$ is \emph{consistent} with the PDE
\[ F(x,u(x),\nabla u(x), D^2u(x)) = 0, \quad x\in\bar{\Omega} \]
 if for any smooth function $\phi$ and $x\in\bar{\Omega}$,
\[ \limsup_{h\to0,y\to x, z\in\G^h\to x,\xi\to0} F^h(z,\phi(y)+\xi,\phi(y)-\phi(\cdot)) \leq F^*(x,\phi(x),\nabla\phi(x),D^2\phi(x)), 
\]
\[ \liminf_{h\to0,y\to x, z\in\G^h\to x,\xi\to0} F^h(y,\phi(y)+\xi,\phi(y)-\phi(\cdot)) \geq F_*(x,\phi(x),\nabla\phi(x),D^2\phi(x)). \]
\end{definition}

\begin{definition}[Monotonicity]\label{def:monotonicity}
The scheme~$F^h(x,u(x),u(x)-u(\cdot))$ is \emph{monotone} if $F^h$ is a non-decreasing function of its final two arguments.
\end{definition}

\begin{definition}[Stability]\label{def:stability}
The scheme~$F^h(x,u(x),u(x)-u(\cdot))$ is \emph{stable} if there exists a constant $M\in\R$, independent of $h$, such that if $u^h$ is any solution to $F^h(x,u^h(x),u^h(x)-u^h(\cdot)) = 0$ than $\|u^h\|_\infty \leq M$ for all sufficiently small $h>0$.
\end{definition}

Critical to the analysis of~\cite{BSnum} is that not only the PDE, but also the boundary conditions, be interpreted in the viscosity sense.  To accomplish this, the PDE operator needs to be defined on the closure of the domain.  For the Dirichlet problem for the \MA equation, this leads to the operator
\bq\label{eq:dirichletMA}
F(x,u,p,X) = \begin{cases} -{\det}^+(X) + f(x,u,p), & x \in \Omega\\
u - g(x),  & x \in\partial\Omega. \end{cases}
\eq

Viscosity solutions of the generalised Dirichlet problem for the \MA equation are then defined by applying Definitions~\ref{def:subsuper}-\ref{def:viscosity} at all points $x\in\bar{\Omega}$~\cite{Urbas_MA}.
 Given a continuous right-hand side $f$, this requires us to consider the following envelopes of the PDE operator~\eqref{eq:dirichletMA}:
\bq\label{eq:MAlower}
F_*(x,u,p,X) = \begin{cases}
-{\det}^+(X) + f(x,u,p), & x\in\Omega\\
\min\{u-g(x),-{\det}^+(X) + f(x,u,p)\}, & x\in\partial\Omega, 
\end{cases}
\eq
\bq\label{eq:MAupper}
F^*(x,u,p,X) = \begin{cases}
-{\det}^+(X) + f(x,u,p), & x\in\Omega\\
\max\{u-g(x),-{\det}^+(X) + f(x,u,p)\}, & x\in\partial\Omega.
\end{cases}
\eq

The difficulty in using the Barles-Souganidis convergence framework in practice is that it requires the PDE, posed on $\bar{\Omega}$, to satisfy a strong form of a comparison principle.
\begin{definition}[Strong comparison]\label{def:comparisonStrong}
A PDE operator $F:\bar{\Omega}\times\R\times\R^n\times\Sf^n\to\R$ satisfies a \emph{strong comparison principle} if, whenever $u$ is a viscosity subsolution and $v$ a viscosity supersolution, then $u \leq v$ on $\bar{\Omega}$.
\end{definition}
This differs from the usual comparison principle (Theorem~\ref{thm:comparison}) in that there is no explicit reference to values of $u$ and $v$ on the boundary; this is all interpreted in a weak sense via the operators~\eqref{eq:MAlower}-\eqref{eq:MAupper}.

In fact, it is not currently known if \emph{any} fully nonlinear PDE satisfies this strong form of the comparison principle.  The Barles-Souganidis framework has inspired the development of many numerical methods for fully nonlinear elliptic equations in recent years. However, many of these have an incomplete convergence proof that relies on the assumption of a strong comparison principle~\cite{FO_MATheory,feng2021narrow,mirebeau2016minimal,oberman2005convergent,WanMA,oberman2013finite,ObermanWS,FOFiltered,benamou2016monotone}.  The proof of a strong comparison principle would close the gaps in many existing convergence proofs and pave the way for the development of new provably convergent methods.

\section{Failure of strong comparison principle}\label{sec:failure}
A strong comparison principle for the \MA equation (or any other nonlinear elliptic PDE) is certainly not trivial.  Indeed, there are settings where the equation is known to \emph{not} possess a strong comparison principle.

\subsection{Non-uniform ellipticity of the domain.}  The first example we discuss was first described in~\cite{jensensmears}.  In this example, strong comparison fails due to non-uniform ellipticity of the domain.

Let $\Omega\subset\R^2$ be the half-plane $\Omega = \{(x_1,x_2)\in\R^2 \mid x_1>0\}$.  We consider the \MA equation with $f(x,u,p) = 0$ and homogeneous Dirichlet data $g(x) = 0$.  The PDE operator is
\bq\label{eq:ex1} F(x,D^2u(x)) = \begin{cases} -{\det}^+(D^2u(x)), & x \in \Omega\\ u(x),  & x \in \partial\Omega.\end{cases} \eq

We propose the following sub(super)solutions of the generalised Dirichlet problem:
\[ u(x) = 0, \quad v(x) = \begin{cases} 0, & x \in \Omega\\ -1, & x \in \partial\Omega.\end{cases} \]
Trivially, $u$ is a solution (and therefore subsolution) of~\eqref{eq:ex1}.

It is also clear that $v$ satisfies the definition of a viscosity supersolution at interior points $x_0\in\Omega$.  We now consider any $x_0\in\partial\Omega$ and verify that $v$ satisfies the definition of a viscosity supersolution of the generalised Dirichlet problem at this point.  Consider any $\phi\in C^2(\bar{\Omega})$ such that $v-\phi$ has a local minimum at $x_0$.  Then we must have
\[ v(x_0) - \phi(x_0) \leq v(x) - \phi(x) \]
for all $x$ near $x_0$.  In particular, consider points of the form $x = x_0 \pm \epsilon y\in\partial\Omega$ where $\epsilon>0$ and $y = (0,1)$ is tangent to $\partial\Omega$ at $x_0$.  Then for sufficiently small $\epsilon$,
\[ \phi(x_0 \pm \epsilon y) - \phi(x_0) \leq v(x_0 \pm \epsilon y) - v(x_0) = 0. \]
Since $\phi\in C^2$, we can combine these results and take $\epsilon\to0$ in the standard centered difference formula to verify that
\[ \frac{\partial^2\phi}{\partial y^2}(x_0) = \lim\limits_{\epsilon\to0}\frac{\phi(x_0+\epsilon)+\phi(x_0-\epsilon)-2\phi(x_0)}{\epsilon^2} \leq 0. \]
From the definition of the extended determinant~\eqref{eq:detPlus}, we conclude that
\[ -{\det}^+(D^2\phi(x_0)) \geq 0, \]
from which we deduce that
\[ F^*(x_0,v(x_0),\nabla\phi(x_0),D^2\phi(x_0)) = \max\{-{\det}^+(D^2\phi(x_0)),v(x_0)\} \geq 0. \]
Therefore $v$ is a viscosity supersolution.

However, it is clear that for $x_0\in\partial\Omega$, $u(x_0) = 0 > -1 = v(x_0)$.  Thus the strong comparison principle fails in this setting.

\subsection{Gradient blow-up.}
The second example we consider was first described in~\cite{Hamfeldt_gaussian}.  In this case, the strong comparison principle fails because of the presence of strong gradient terms in the equation, which in turn allow for the existence of a solution whose gradient blows up at the boundary.

We consider the prescribed Gaussian curvature equation with unit curvature posed on the one-dimensional domain $\Omega = (0,1)$.  The PDE operator is
\bq\label{eq:ex2}
F(x,u,u_x,u_xx) = 
\begin{cases}
-u_{xx} + (1+u_x^2)^{3/2}, & x \in \Omega\\
u+1, & x = 0\\
u-1, & x = 1.
\end{cases}
\eq

Given that the solution to this equation should have constant unit Gaussian curvature, we expect the solution to define a portion of a unit circle.  However, it is not possible to fit a unit circle to the given Dirichlet data ($u(0) = -1$, $u(1) = 1$).  Indeed, the viscosity solution to the generalised Dirichlet problem is known to be discontinuous~\cite{Bakelman,Hamfeldt_gaussian}.

We propose the following sub(super) solutions:
\[
u(x) = \begin{cases}
-\sqrt{1-x^2}, & x\in[0,1)\\ 1, & x = 1,
\end{cases} \quad v(x) = -\sqrt{1-x^2}.\]

It is easy to verify that $u$ is upper semi-continuous and satisfies the definition of a subsolution at all points in $[0,1)$.  Now we verify that $u$ satisfies the definition of a subsolution at $x_0 = 1$.  Consider any $\phi\in C^2$ such that $u-\phi$ has a local max at $x_0$.  Since $u(1) = 1$, we can easily confirm that
\[ F_*(1,u(1),\phi_x(1),\phi_{xx}(1)) = \min\{-\phi_{xx}(1) + (1+\phi_x(1)^2)^{3/2}, u(1) - 1\} \leq 0. \]
Thus $u$ is a viscosity subsolution of the generalised Dirichlet problem.

Next we observe that $v$ is trivially a supersolution of the equation at every $x_0\in[0,1)$ since $v$ is a classical solution of the ODE and $v(0) = -1$.  Now we verify that $v$ is a supersolution at the point $x_0 = 1$.  This requires us to test every smooth $\phi\in C^2$ such that $v-\phi$ has a local max at $x_0 = 1$.  However, we notice that $v$ has an empty subgradient at $x_0 = 1$ since $v_x \to \infty$ as $x \to 1$. Thus there are actually \emph{no} smooth test functions $\phi$ with the required property and the definition of a supersolution is trivially satisfied at $x_0 = 1$.  We conclude that $v$ is a viscosity supersolution of the generalised Dirichlet problem.

However, we notice that $u(1) = 1 > 0 = v(1)$ so the strong comparison principle fails.

\section{Proof of strong comparison principle}\label{sec:proof}
We now prove that the Dirichlet problem for the \MA equation~\eqref{eq:dirichletMA} does have a strong comparison principle in the setting where Lipschitz continuous viscosity solutions exist.  As discussed in Remark~\ref{rem:lipschitz}, there are a variety of different settings that guarantee this level of regularity. The only additional hypotheses required are the usual assumptions needed to ensure the equation has a traditional comparison principle (Theorem~\ref{thm:comparison}).

\begin{theorem}[Strong comparison principle]\label{thm:strongcomparison}
Consider the generalised Dirichlet problem for the \MA equation~\eqref{eq:dirichletMA} where the domain $\Omega$ and right-hand side $f$ satisfy the assumptions of Theorem~\ref{thm:comparison}.  Suppose also that there exists a viscosity solution $u \in C^{0,1}(\bar{\Omega})$ that satisfies $u(x) = g(x)$ on $\partial\Omega$.  Let $v$ be any viscosity subsolution and $w$ any viscosity supersolution of the   generalised Dirichlet problem.  Then $v \leq w$ on $\bar{\Omega}$.
\end{theorem}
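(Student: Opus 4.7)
The plan is to show $v \leq u \leq w$ on $\bar{\Omega}$ by using the Lipschitz viscosity solution $u$ as a two-sided barrier. The two inequalities are proved by dual arguments, so I focus on $v \leq u$. Theorem~\ref{thm:comparison} applied with $v$ as subsolution and $u$ as supersolution of~\eqref{eq:MAconvex} in the interior yields
\[
\max_{\bar{\Omega}}(v-u) \leq \max_{\partial\Omega}(v-u)^+ = \max_{\partial\Omega}(v-g)^+,
\]
so the only thing to prove is the pointwise boundary bound $v(x) \leq g(x)$ for every $x \in \partial\Omega$.

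I argue by contradiction. Assume $M := v(x_0) - g(x_0) > 0$ for some $x_0 \in \partial\Omega$, which by the preceding reduction is the global maximum of $v-u$ on $\bar{\Omega}$. Because $v(x_0) - g(x_0) > 0$, the generalised subsolution condition at $x_0$ collapses to the PDE half of $F_*$: every $C^2$ test function $\phi$ with $v-\phi$ locally maximal at $x_0$ must satisfy
\[
{\det}^+(D^2\phi(x_0)) \geq f(x_0, v(x_0), \nabla\phi(x_0)) \geq 0.
\]
To manufacture a test function violating this inequality, I use the doubling-variable functional
\[
\Phi_\epsilon(x,y) = v(x) - u(y) - \frac{|x-y|^2}{2\epsilon}
\]
on $\bar{\Omega}\times\bar{\Omega}$. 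Its maximisers $(x_\epsilon, y_\epsilon)$ converge to $x_0$ and, since $u$ is $L$-Lipschitz, comparing $\Phi_\epsilon(x_\epsilon,y_\epsilon)$ with $\Phi_\epsilon(x_\epsilon,x_\epsilon)$ gives $|x_\epsilon-y_\epsilon| \leq 2L\epsilon$. Consequently the penalty gradient $p_\epsilon := (x_\epsilon - y_\epsilon)/\epsilon$ remains bounded by $2L$ uniformly in $\epsilon$, and the Crandall--Ishii--Lions theorem of sums furnishes matrices $X_\epsilon \leq Y_\epsilon$ in the closed superjet of $v$ at $x_\epsilon$ and the closed subjet of $u$ at $y_\epsilon$.

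The main obstacle is that $(x_\epsilon, y_\epsilon)$ may land on $\partial\Omega$, where the generalised boundary condition can be satisfied via the boundary part of the envelope rather than the PDE part, denying us the PDE information we need. I plan to address this by exploiting the uniform convexity of $\partial\Omega$ assumed in Theorem~\ref{thm:comparison}, which supplies the inward geometric inequality $(x-x_0)\cdot\hat{n}_{\mathrm{in}} \gtrsim |x-x_0|^2$ for $x \in \bar{\Omega}$ near $x_0$. Replacing $\Phi_\epsilon$ by the tilted functional $\Phi_\epsilon(x,y) - \beta(x-x_0)\cdot\hat{n}_{\mathrm{out}}$ for a small parameter $\beta > 0$ biases the maximiser into the interior of $\Omega$ while perturbing the maximum value by only $O(\beta)$. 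The resulting pair of interior PDE inequalities, combined with $X_\epsilon \leq Y_\epsilon$, the monotonicity of $f$ in $u$ (used through $v(x_\epsilon) > u(y_\epsilon)$ in the limit), and the local Lipschitz dependence of $f^{1/n}$ on $p$ hypothesised in Theorem~\ref{thm:comparison}, force the strict contradiction $M \leq 0$ in the iterated limit $\epsilon \to 0$, $\beta \to 0$. The dual construction with $u$ as subsolution and $w$ as supersolution produces $u \leq w$, closing the sandwich $v \leq u \leq w$.
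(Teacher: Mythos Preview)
Your sandwich strategy $v \leq u \leq w$ and the reduction to boundary inequalities via Theorem~\ref{thm:comparison} match the paper's outline, but the mechanism you propose for the two boundary inequalities has a real gap. In your doubling $\Phi_\epsilon(x,y)=v(x)-u(y)-|x-y|^2/(2\epsilon)$ with a tilt only in $x$, nothing prevents $y_\epsilon$ from landing on $\partial\Omega$; there $u(y_\epsilon)=g(y_\epsilon)$, so the generalised supersolution condition $\max\{-{\det}^+(Y_\epsilon)+f,\,u(y_\epsilon)-g(y_\epsilon)\}\geq 0$ is met trivially by the boundary branch and yields no PDE inequality for $u$. Without that inequality you cannot exploit $X_\epsilon\leq Y_\epsilon$, and even if both points were interior your sketch does not say where a \emph{strict} contradiction $M\leq 0$ comes from when $f$ is only non-decreasing in its second argument. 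The paper sidesteps all of this for the $v\leq g$ direction: it never doubles variables and never uses $u$. It first observes that any subsolution of~\eqref{eq:MAconvex} is convex (since $-{\det}^+(D^2\phi)+f\leq 0$ forces $D^2\phi\geq 0$), then uses the convexity of $v$ together with the uniform convexity of $\Omega$ to build a \emph{concave} $C^2$ test function touching $v$ from above at a nearby boundary point. Concavity makes $-{\det}^+(D^2\phi)+f>0$, so the subsolution inequality $F_*\leq 0$ is forced onto the boundary branch, giving $v\leq g$ directly.

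Your claim that a ``dual construction'' yields $u\leq w$ is the more serious issue: the two directions are genuinely asymmetric. Supersolutions $w$ need not be convex or concave, and the paper's second counterexample shows that $\partial w(x_0)$ can be empty at a boundary point, in which case no $C^2$ function touches $w$ from below there and the viscosity supersolution condition is vacuous. The paper's argument for $w\geq g$ is instead: by interior comparison the global maximum of $u-w$ is attained at some $x_0\in\partial\Omega$; since $u$ is convex and Lipschitz, $\partial u(x_0)\ni p$ for some $p$; then $w(x)-w(x_0)\geq u(x)-u(x_0)\geq p\cdot(x-x_0)$, so the same $p$ lies in $\partial w(x_0)$. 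A short lemma (using a test function with an arbitrarily large normal second derivative) shows that a supersolution with non-empty subgradient at a boundary point must satisfy $w\geq g$ there, giving the contradiction. This subgradient transfer from $u$ to $w$ is precisely where the Lipschitz hypothesis is consumed, and it is the idea your proposal is missing.
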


Combining this with the framework proposed by Barles and Souganidis~\cite{BSnum} and further developed by Oberman~\cite{oberman2006convergent}, this leads immediately to a very general result on the convergence of monotone finite difference schemes for the \MA equation.

\begin{corollary}[Convergence of monotone approximation schemes]\label{cor:convergence}
Let $u\in C^{0,1}(\bar{\Omega})$ be the unique viscosity solution of the Dirichlet problem for the \MA equation under the hypotheses of Theorem~\ref{thm:strongcomparison}.  Let $F^h$ be any consistent, monotone, stable approximation scheme and let $u^h$ be any solution of the scheme $F^h(x,u^h(x),u^h(x)-u^h(\cdot)) = 0$.  Then $u^h$ converges to $u$ pointwise on $\bar{\Omega}$ as $h\to 0$.
\end{corollary}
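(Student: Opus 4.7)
The plan is to deploy the classical half-relaxed limits technique of Barles and Souganidis, using Theorem~\ref{thm:strongcomparison} as the strong comparison principle that closes the argument. By stability (Definition~\ref{def:stability}), the family $\{u^h\}$ is uniformly bounded in $L^\infty(\bar{\Omega})$ for all sufficiently small $h$, so I can define the upper and lower half-relaxed limits
\begin{equation*}
\bar{u}(x) = \limsup_{h\to 0,\, y\to x,\, y\in\G^h} u^h(y), \qquad \underline{u}(x) = \liminf_{h\to 0,\, y\to x,\, y\in\G^h} u^h(y),
\end{equation*}
on $\bar{\Omega}$. These are, respectively, upper and lower semicontinuous, bounded, and satisfy $\underline{u} \leq \bar{u}$ pointwise.

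Next I would show that $\bar{u}$ is a viscosity subsolution and $\underline{u}$ a viscosity supersolution of the generalised Dirichlet problem with operator~\eqref{eq:dirichletMA}, where the boundary condition is interpreted through the envelopes $F^*$ and $F_*$ in~\eqref{eq:MAlower}-\eqref{eq:MAupper}. For the subsolution property, I take $x_0\in\bar{\Omega}$ and $\phi\in C^2$ with $\bar{u} - \phi$ having a strict local max at $x_0$; standard arguments produce a sequence $h_k\to 0$ and grid points $x_k\in\G^{h_k}\to x_0$ at which $u^{h_k}-\phi$ attains a local maximum, with $u^{h_k}(x_k)\to \bar{u}(x_0)$. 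Writing $u^{h_k}(x_k)=\phi(x_k)+\xi_k$ with $\xi_k\to 0$, monotonicity (Definition~\ref{def:monotonicity}) yields
\begin{equation*}
F^{h_k}(x_k, \phi(x_k)+\xi_k, \phi(x_k)-\phi(\cdot)) \leq F^{h_k}(x_k, u^{h_k}(x_k), u^{h_k}(x_k)-u^{h_k}(\cdot)) = 0,
\end{equation*}
and passing to the limsup via consistency (Definition~\ref{def:consistency}) gives $F^*(x_0,\bar{u}(x_0),\nabla\phi(x_0),D^2\phi(x_0))\leq 0$. The supersolution argument for $\underline{u}$ is symmetric.

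Applying Theorem~\ref{thm:strongcomparison} with subsolution $\bar{u}$ and supersolution $\underline{u}$ gives $\bar{u} \leq \underline{u}$ on $\bar{\Omega}$. Combined with the reverse inequality, this forces $\bar{u} = \underline{u}$ everywhere; this common function is then a viscosity solution of the generalised Dirichlet problem, so by the uniqueness guaranteed by Theorem~\ref{thm:strongcomparison} it must equal $u$. The equality of the half-relaxed limits is, by a standard argument, equivalent to pointwise convergence $u^h(x)\to u(x)$ on $\bar{\Omega}$ as $h\to 0$.

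The main obstacle is not in the algebra but in correctly handling the boundary behaviour: the strong comparison of Theorem~\ref{thm:strongcomparison} is doing all of the heavy lifting here, since it eliminates the need to verify classically that $\bar{u}(x)\leq g(x)\leq \underline{u}(x)$ on $\partial\Omega$, which is precisely the step that fails for the \MA equation without Lipschitz regularity (as shown in Section~\ref{sec:failure}). A minor technical point is to work with a strict local extremum of $\bar{u}-\phi$ (obtained by the usual perturbation $\phi+\epsilon\abs{x-x_0}^2$) so that the grid maximisers $x_k$ genuinely converge to $x_0$, and to verify that points $x_k\in\partial\Omega$ are handled by the boundary branch of the envelopes $F^*$, $F_*$; both are standard and carry over verbatim from~\cite{BSnum, oberman2006convergent}.
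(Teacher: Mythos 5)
Your proposal is correct and follows exactly the route the paper invokes: the paper does not spell out a proof of the corollary, but simply cites the Barles--Souganidis framework~\cite{BSnum} as extended by Oberman~\cite{oberman2006convergent}, for which Theorem~\ref{thm:strongcomparison} now supplies the strong comparison principle; you have reconstructed that standard half-relaxed-limits argument faithfully. One notational slip worth fixing: in the subsolution step the conclusion should be $F_*(x_0,\bar{u}(x_0),\nabla\phi(x_0),D^2\phi(x_0))\leq 0$, obtained from the $\liminf$ branch of Definition~\ref{def:consistency} (the $\liminf$ over all admissible sequences is bounded above by the liminf along your particular sequence, which is $\leq 0$, and bounded below by $F_*$), not $F^*\leq 0$ via a $\limsup$; happily $F_*\leq 0$ is exactly what Definition~\ref{def:subsuper} requires of a subsolution, so the logic closes, and dually the supersolution step uses the $\limsup$ branch to conclude $F^*\geq 0$. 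You should also state explicitly that $\phi$ has been shifted by a constant so $\phi(x_0)=\bar{u}(x_0)$, which is needed for $\xi_k\to 0$.
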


This result completes the proof of convergence for many existing numerical methods including~\cite{FO_MATheory,feng2021narrow,mirebeau2016minimal,oberman2005convergent,WanMA,oberman2013finite,ObermanWS,FOFiltered,benamou2016monotone}.

\subsection{Behaviour of sub- and supersolutions}
In order to prove Theorem~\ref{thm:strongcomparison}, we will need to understand the behaviour of viscosity sub- and supersolutions at the boundary of the domain.  Unlike in the classical setting, this is not as simple as asserting that $v \leq g \leq w$ on $\partial\Omega$. In this subsection, we will generalise and tighten several observations made by the author for the equation of prescribed Gaussian curvature~\cite{Hamfeldt_gaussian}.

We begin by noting that viscosity subsolutions are automatically convex, which is a fairly straightforward consequence of~\cite[Theorem~1]{ObermanCE}.
\begin{lemma}[Subsolutions are convex]\label{lem:subConvex}
Let $v$ be an upper semi-continuous sub-solution of the \MA equation~\eqref{eq:dirichletMA}.  Then $v$ is convex.
\end{lemma}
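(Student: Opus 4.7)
The plan is to reduce the claim to the well-known characterization of convexity via the smallest-eigenvalue operator, which is the content of \cite[Theorem~1]{ObermanCE}. The key observation is that the sign constraint built into ${\det}^+$, combined with the non-negativity of $f$, forces every $C^2$ test function touching $v$ from above to have a positive semidefinite Hessian at the contact point. This immediately identifies $v$ as a viscosity subsolution of $-\lambda_1(D^2 v) = 0$, and Oberman's result then supplies convexity.

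Concretely, I would first fix an interior point $x_0 \in \Omega$ and any $\phi \in C^2$ such that $v - \phi$ attains a local maximum at $x_0$. The subsolution inequality at $x_0$ reads
\[
-{\det}^+(D^2\phi(x_0)) + f(x_0, v(x_0), \nabla\phi(x_0)) \leq 0,
\]
so that ${\det}^+(D^2\phi(x_0)) \geq f(x_0, v(x_0), \nabla\phi(x_0)) \geq 0$, where the last inequality uses the non-negativity of $f$ assumed in Theorem~\ref{thm:comparison}. By the definition~\eqref{eq:detPlus} of ${\det}^+$, a non-negative value is only attained when the argument is positive semidefinite; hence $D^2\phi(x_0) \geq 0$, equivalently $-\lambda_1(D^2\phi(x_0)) \leq 0$. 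Since $x_0$ and $\phi$ were arbitrary, $v$ is a viscosity subsolution of the degenerate elliptic equation $-\lambda_1(D^2 v) = 0$ in $\Omega$.

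At this point I would invoke \cite[Theorem~1]{ObermanCE}, which states that any upper semi-continuous viscosity subsolution of this smallest-eigenvalue equation on an open convex set is convex in the classical sense. Applied to our setting, where $\Omega$ is convex by the hypotheses inherited from Theorem~\ref{thm:comparison}, this directly yields convexity of $v$ on $\Omega$. Boundary points of $\bar{\Omega}$ are not needed in the argument because the convex-analytic consequences used elsewhere in the paper (supporting hyperplanes, subgradients) are interior notions; the boundary-value content of being a subsolution plays no role here.

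There is really no hard step: the only mild subtlety is being careful about the definition of ${\det}^+$ on non-PSD matrices, since otherwise one might be tempted to write ${\det}^+(D^2 \phi(x_0)) \geq 0$ and conclude nothing. The point is precisely that the author's extension~\eqref{eq:detPlus} was designed so that ${\det}^+ \geq 0$ forces positive semidefiniteness, which is exactly what converts the \MA subsolution property into a convexity statement.
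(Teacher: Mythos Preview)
Your proposal is correct and follows essentially the same argument as the paper: show that the subsolution inequality forces ${\det}^+(D^2\phi(x_0))\geq 0$, deduce $D^2\phi(x_0)\geq 0$ from the definition of ${\det}^+$, conclude that $v$ is a viscosity subsolution of $-\lambda_1(D^2v)=0$, and invoke \cite[Theorem~1]{ObermanCE}. If anything, your write-up is slightly more explicit about the role of the non-negativity of $f$ in obtaining ${\det}^+(D^2\phi(x_0))\geq 0$.
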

\begin{proof}
Choose $x_0\in\Omega$ and $\phi\in C^2$ such that $v-\phi$ has a local maximum at $x_0$.  Since $v$ is a subsolution of~\eqref{eq:dirichletMA},
\[ -{\det}^+(D^2\phi(x_0)) + f(x,v(x_0),\nabla\phi(x_0)) \leq 0. \]
An immediate consequence of this is that ${\det}^+(D^2\phi(x_0)) \geq 0$.  From the definition of the extended determinant operator~\eqref{eq:detPlus}, this is only possible if $D^2\phi(x_0) \geq 0$.  This, in turn, requires that the smallest eigenvalue of the Hessian $\lambda_1(D^2\phi(x_0))$ is non-negative
 and therefore $v$ is also a sub-solution of
\[ -\lambda_1(D^2v(x)) = 0. \]
This is precisely the hypothesis of~\cite[Theorem~1]{ObermanCE}, which establishes the convexity of $v$. 
\end{proof}

Next, we observe that subsolutions of the generalised Dirichlet problem are actually subsolutions of the Dirichlet boundary condition in the usual sense.
\begin{lemma}[Behaviour of subsolutions at boundary]\label{lem:subBC}
Let $v$ be an upper semi-continuous sub-solution of~\eqref{eq:dirichletMA}.  Then $v\leq g$ on $\partial\Omega$.
\end{lemma}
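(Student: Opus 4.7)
The plan is a proof by contradiction. Suppose there exists $x_0 \in \partial\Omega$ with $v(x_0) > g(x_0)$. I will exhibit a test function $\phi \in C^2(\bar\Omega)$ and a boundary point $\hat x \in \partial\Omega$ with $v(\hat x) > g(\hat x)$ such that $v-\phi$ has a local maximum at $\hat x$ and $-\det^+(D^2\phi(\hat x)) + f(\hat x, v(\hat x), \nabla\phi(\hat x)) > 0$. Combined with $v(\hat x) - g(\hat x) > 0$, this forces $\min\{v(\hat x)-g(\hat x), -\det^+(D^2\phi(\hat x)) + f(\hat x, v(\hat x), \nabla\phi(\hat x))\} > 0$, violating the generalised boundary subsolution condition and giving the desired contradiction.

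The first step is to reduce to a favorable boundary point using the classical comparison principle (Theorem~\ref{thm:comparison}). Applied to the subsolution $v$ and the Lipschitz solution $u$ (which is a supersolution of the interior equation), this gives $\max_{\bar\Omega}(v-u) \leq \max_{\partial\Omega}(v-g)^+$. Under our assumption this max is strictly positive and is attained at some $\hat x \in \partial\Omega$, yielding the pointwise inequality $v(y) \leq u(y) + M$ on $\bar\Omega$ with equality at $\hat x$, where $M = v(\hat x) - g(\hat x) > 0$. Lipschitz continuity of $u$ (with constant $L$) then produces the one-sided estimate $v(y) - v(\hat x) \leq L|y - \hat x|$ for $y \in \bar\Omega$ near $\hat x$.

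The core of the proof is the construction of $\phi$ using the uniform convexity of $\Omega$. For $y \in \bar\Omega$ near $\hat x$, uniform convexity gives $\langle \nu, y - \hat x\rangle \leq -c|y - \hat x|^2$ where $\nu$ is the outer unit normal at $\hat x$ and $c>0$. I propose
\[
\phi(y) = v(\hat x) + \langle p, y - \hat x\rangle + \tfrac12 (y - \hat x)^T A (y - \hat x)
\]
with $p$ a large inward multiple of $-\nu$ (together with a tangential component matched to the gradient of the boundary data) and $A$ diagonal in an orthonormal basis $(\nu, e_1, \ldots, e_{n-1})$, with a large positive eigenvalue in the $\nu$ direction and negative eigenvalues in the tangential directions. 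The parabolic constraint from uniform convexity confines admissible $y \in \bar\Omega$ to a thin cone around the inward normal, where the negative tangential contributions are only $O(|y-\hat x|^2)$ while the large inward normal slope and the normal quadratic term dominate. Once $\phi \geq v$ locally is verified, $D^2\phi(\hat x) = A$ fails to be positive semi-definite, so $\det^+(A) < 0$ and $-\det^+(A) + f(\hat x, v(\hat x), \nabla\phi(\hat x)) > 0$, yielding the contradiction.

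The main obstacle is verifying $\phi \geq v$ in a full neighbourhood of $\hat x$ in $\bar\Omega$, particularly in near-tangent directions where the one-sided Lipschitz bound is loosest and the negative tangential eigenvalues of $A$ are most threatening. The essential ingredients will be: (i) the fact that $\hat x$ is the maximum point of $v - u$, so on $\partial\Omega$ near $\hat x$ one has the sharper bound $v(y) - v(\hat x) \leq g(y) - g(\hat x)$, controlled by the $C^{1,1}$ regularity of the Dirichlet data assumed in Theorem~\ref{thm:lipExist}; (ii) the parabolic cone shape of $\bar\Omega$ at $\hat x$ from uniform convexity, which makes the tangential component $|(y-\hat x)_T|$ small compared with the normal component in an $\bar\Omega$-averaged sense; and (iii) careful balancing of $p$, the eigenvalues of $A$, the curvature parameter $c$, and the Lipschitz constants $L$ and $L_g$. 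The verification is the delicate part, and it is conceivable that additional refinements---such as incorporating inf-convolutions of $u$ near $\hat x$ to extract more local structure than the pure Lipschitz bound provides---will be needed to close the estimate.
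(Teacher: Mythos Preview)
Your proposal has two substantial problems, one structural and one technical.

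\textbf{Structural.} You invoke the Lipschitz viscosity solution $u$ and the comparison principle between $v$ and $u$ to locate the favourable point $\hat x$ and to obtain the one-sided bound $v(y)-v(\hat x)\le L|y-\hat x|$. But the lemma, as stated, does not assume that such a $u$ exists; that hypothesis belongs to Theorem~\ref{thm:strongcomparison}, not to Lemma~\ref{lem:subBC}. The paper proves the lemma for an arbitrary upper semi-continuous subsolution, without any reference to a solution $u$. So at minimum you are proving a weaker statement than the one asserted.

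\textbf{Technical.} Even granting the existence of $u$, your construction does not close. Write $y-\hat x = -t\nu+\tau$ with $\tau\perp\nu$; uniform convexity gives only $t\ge c|\tau|^2$ for $y\in\bar\Omega$. With $p=-K\nu$ and $A$ having normal eigenvalue $b>0$ and tangential eigenvalues $-a<0$, you need
\[
Kt+\tfrac{b}{2}t^2-\tfrac{a}{2}|\tau|^2 \;\ge\; L\sqrt{t^2+|\tau|^2}.
\]
On the parabolic boundary $t\sim c|\tau|^2$ with $|\tau|$ small, the left side is $O(|\tau|^2)$ while the right side is $L|\tau|$, so the inequality fails near $\hat x$ in near-tangent directions no matter how large $K$ is. Your proposed fixes (the sharper bound $v(y)-v(\hat x)\le g(y)-g(\hat x)$ on $\partial\Omega$, or inf-convolutions of $u$) do not resolve this: the first only controls boundary points, and the second still yields at best a Lipschitz bound, which is exactly what is too weak.

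The paper's argument avoids this entirely by using a fact you never invoke: every subsolution of~\eqref{eq:dirichletMA} is \emph{convex} (Lemma~\ref{lem:subConvex}). Convexity is much stronger than a one-sided Lipschitz bound. The paper takes an affine function $P_\epsilon$ with large inward slope so that $v<P_\epsilon$ on $\partial B_\epsilon(x_0)\cap\bar\Omega$, then uses convexity of $v$ to show that $v-P_\epsilon$ cannot attain its maximum in the interior of $B_\epsilon\cap\Omega$; hence the maximiser $z_\epsilon$ lies on $B_\epsilon\cap\partial\Omega$. At $z_\epsilon$ one adds a strictly concave quadratic correction (using uniform convexity of $\Omega$) to produce a $C^2$ test function with $D^2\phi_\epsilon=-2\beta I<0$, so ${\det}^+(D^2\phi_\epsilon)<0$ and the subsolution inequality forces $v(z_\epsilon)\le g(z_\epsilon)$. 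Letting $\epsilon\to0$ gives $v(x_0)\le g(x_0)$. The missing idea in your approach is precisely this use of convexity to push the touching point onto $\partial\Omega$.
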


\begin{proof}
Choose any $x_0\in\partial\Omega$ and small $\epsilon>0$.  Since $\Omega$ is convex, there exists a supporting hyperplane to the domain at $x_0$.  We  let $n(x_0)$ be the unit outward normal to any such hyperplane.  Since $\Omega$ is uniformly convex, there exists some $\alpha>0$ such that for any $x\in\bar{\Omega}$ with $\abs{x-x_0}$ sufficiently small,
\[ -n(x_0)\cdot(x-x_0) \geq \alpha\abs{x-x_0}^2. \]

Denote by $B_\epsilon$ the open ball $B(x_0,\epsilon)$.  
We let $\gamma_\epsilon = \dfrac{1}{\alpha\epsilon^2}\left(\max\limits_{\partial B_\epsilon\cap\bar{\Omega}}v-v(x_0)+\epsilon\right)$ and define the hyperplane
\[ P_\epsilon(x) \equiv v(x_0)-\gamma_\epsilon n(x_0)\cdot(x-x_0). \]
Then we notice that for any $x\in\partial B_\epsilon\cap\bar{\Omega}$,
\bq\label{eq:hyperplaneBdy} P_\epsilon(x) \geq v(x_0) + \gamma_\epsilon\alpha\epsilon^2>\max\limits_{\partial B_\epsilon\cap\bar{\Omega}}v \geq v(x). \eq

Since $v$ is upper semi-continuous, there exists some
\[ z_\epsilon \in \argmax\limits_{\bar{B_\epsilon}\cap\bar{\Omega}}\{v-P_\epsilon\}. \]
We note that $v-P_\epsilon<0$ on $\partial B_\epsilon\cap\bar{\Omega}$ by~\eqref{eq:hyperplaneBdy} and $v(x_0)-P_\epsilon(x_0) = 0$.  Thus the maximiser $z_\epsilon\notin\partial B_\epsilon\cap\bar{\Omega}$ and the local maximum satisfies $v(z_\epsilon)-P_\epsilon(z_\epsilon) \geq 0$.

Consider any $x\in B_\epsilon\cap\Omega$.  As the intersection of two convex sets, $B_\epsilon\cap\Omega$ is also convex.  Since $x$ is in the interior of this convex set, it can be expressed as $\lambda_1 x_1 + \lambda_2 x_2$ for some $x_1\in\partial B_\epsilon\cap\Omega$, $ x_2 \in B_\epsilon\cap\Omega$ and $\lambda_1, \lambda_2 > 0$ with $\lambda_1+\lambda_2 = 1$.  We have $v(x_1)-P_\epsilon(x_1)<0$ since $x_1$ is on the boundary of the ball~\eqref{eq:hyperplaneBdy}.  Then using the fact that $v$ is convex (Lemma~\ref{lem:subConvex}) and $P_\epsilon$ is affine, we can calculate
\begin{align*}
v(x)-P_\epsilon(x) &= v(\lambda_1 x_1 + \lambda_2 x_2) - P_\epsilon(\lambda_1 x_1 + \lambda_2 x_2)\\
 &\leq \lambda_1(v(x_1)-P_\epsilon(x_1)) + \lambda_2(v(x_2)-P_\epsilon(x_2))\\
&< \lambda_2(v(x_2)-P_\epsilon(x_2))\\
 &\leq v(z_\epsilon)-P_\epsilon(z_\epsilon).
\end{align*}
Therefore the maximiser cannot be in the interior of $B_\epsilon\cap\Omega$.  The only remaining possibility is $z_\epsilon\in B_\epsilon\cap\partial\Omega$.

As $\Omega$ is uniformly convex, there exists $\beta>0$ such that whenever $x\in\bar{\Omega}$,
\[ (x-z_\epsilon)\cdot n(z_\epsilon) \leq -\beta\abs{x-z_\epsilon}^2. \]

Define the test function
\[ \phi_\epsilon(x) = P_\epsilon(x)-(x-z_\epsilon)\cdot n(z_\epsilon) - \beta\abs{x-z_\epsilon}^2 \in C^2. \]
We notice that
\[ v(x)-\phi_\epsilon(x) \leq v(x)-P_\epsilon(x) \leq v(z_\epsilon)-P_\epsilon(z_\epsilon) = v(z_\epsilon)-\phi(z_\epsilon) \]
for $x\in\bar{\Omega}$ sufficiently close to $z_\epsilon$.
Thus $v-\phi_\epsilon$ has a local maximum at $z_\epsilon$.  Since $v$ is a subsolution, this requires
\bq\label{eq:vsub}\begin{split} F_*(z_\epsilon,v(z_\epsilon)&,\nabla\phi_\epsilon(z_\epsilon),D^2\phi_\epsilon(z_\epsilon))\\ &= \min\{-{\det}^+(D^2\phi_\epsilon(z_\epsilon)) + f(z_\epsilon,v(z_\epsilon),\nabla\phi_\epsilon(z_\epsilon)),v(z_\epsilon)-g(z_\epsilon)\} \leq 0. \end{split}\eq

However, by construction, $D^2\phi_\epsilon(z_\epsilon) = -2\beta I < 0$, which means that ${\det}^+(D^2\phi_\epsilon(z_\epsilon)) < 0$ as well.
In order to satisfy~\eqref{eq:vsub}, we must have
\[ v(z_\epsilon)-g(z_\epsilon) \leq 0.\]

Now we return to the observation that $z_\epsilon$ is a maximiser of $v-P_\epsilon$ near $x_0$.  In particular, this means that
\begin{align*}
v(x_0) &\leq v(z_\epsilon)-P_\epsilon(z_\epsilon) + P_\epsilon(x_0) \\
  &\leq g(z_\epsilon)-P_\epsilon(z_\epsilon) + P_\epsilon(x_0)\\
	&= g(z_\epsilon) + \gamma_\epsilon n(x_0)\cdot(z_\epsilon-x_0)\\
	&\leq g(z_\epsilon) - \gamma_\epsilon \alpha \abs{z_\epsilon-x_0}^2\\
	&\leq g(z_\epsilon).
\end{align*}

Since the boundary data $g$ is continuous, we can take $\epsilon\to0$ to obtain
\[ v(x_0) \leq g(x_0). \qedhere \]
\end{proof}

Finally, we observe that supersolutions of the generalised Dirichlet problem do \emph{not} need to be supersolutions in the usual sense (i.e it is not necessary for $w \geq g$ at the boundary).  However, this condition can only be violated at points where the subgradient of $w$ is empty.

Here we use the usual definition of the subgradient of a function.  While typically this concept is used in the context of convex functions, we will allow this same definition apply to more general non-convex functions as well.  This, of course, prevents us from utilising any of the usual results regarding the subgradient of a convex function.
\begin{definition}[Subgradient]\label{def:subgradient}
The \emph{subgradient} of a function $u$ at a point $x_0\in\bar{\Omega}$ is the set
\[ \partial u(x_0) = \{p \mid u(x) \geq u(x_0) + p \cdot(x-x_0) \text{ for every } x \in \bar{\Omega}\}. \]
\end{definition}

\begin{lemma}[Behaviour of supersolutions at boundary]\label{lem:superBC}
Let $w$ be a lower semi-continuous super-solution of~\eqref{eq:dirichletMA}.  Then at each $x_0\in\partial\Omega$, either $w(x_0) \geq g(x_0)$ or the subgradient $\partial w(x_0)$ is empty.
\end{lemma}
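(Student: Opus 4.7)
The plan is to argue by contradiction. Suppose there is a boundary point $x_0\in\partial\Omega$ at which $w(x_0) < g(x_0)$ while the subgradient $\partial w(x_0)$ is nonempty, and fix any $p\in\partial w(x_0)$. By Definition~\ref{def:subgradient}, the affine function $L(x) = w(x_0) + p\cdot(x-x_0)$ satisfies $w\geq L$ on $\bar\Omega$ with equality at $x_0$. Since $w(x_0) - g(x_0) < 0$, the viscosity supersolution requirement $F^*(x_0,w(x_0),\nabla\phi,D^2\phi)\geq 0$ (with $F^*$ given by~\eqref{eq:MAupper}) reduces, for every admissible $C^2$ test function $\phi$ with $w-\phi$ attaining a local minimum at $x_0$, to the single inequality
\[ {\det}^+\bigl(D^2\phi(x_0)\bigr) \leq f\bigl(x_0,w(x_0),\nabla\phi(x_0)\bigr). \]
The strategy is to construct a one-parameter family of smooth test functions $\phi_\mu$ for which $w-\phi_\mu$ has a local minimum at $x_0$, $\nabla\phi_\mu(x_0)$ is held fixed as $\mu$ varies, and ${\det}^+(D^2\phi_\mu(x_0))\to\infty$; continuity of $f$ then forces a contradiction.

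The construction exploits the uniform convexity of $\Omega$. Let $n = n(x_0)$ denote the outward unit normal at $x_0$, $P = I - nn^T$ the projection onto the tangent hyperplane, and $\alpha > 0$ the uniform convexity constant so that $n\cdot(x-x_0) \leq -\alpha|x-x_0|^2$ on $\bar\Omega$ near $x_0$ (as in the proof of Lemma~\ref{lem:subBC}). For each $\mu > 0$, define
\[ \phi_\mu(x) = L(x) + n\cdot(x-x_0) + \mu\bigl(n\cdot(x-x_0)\bigr)^2 + \tfrac{\alpha}{2}\,|P(x-x_0)|^2. \]
A direct computation gives $\nabla\phi_\mu(x_0) = p + n$, independent of $\mu$, and $D^2\phi_\mu(x_0) = 2\mu\,nn^T + \alpha P$, a positive-definite matrix with eigenvalue $2\mu$ in the normal direction and $\alpha$ in each of the $n-1$ tangent directions; hence ${\det}^+(D^2\phi_\mu(x_0)) = 2\mu\,\alpha^{n-1}$.

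The main technical step is to verify that $\phi_\mu \leq L$ on $\bar\Omega$ in some (possibly $\mu$-dependent) neighborhood of $x_0$, which gives $\phi_\mu \leq w$ there with equality at $x_0$. Writing $r = |x-x_0|$ and $s = n\cdot(x-x_0) = -\theta r$ with $\theta\in[\alpha r,1]$ (the bound $\theta \geq \alpha r$ encoding uniform convexity), one computes
\[ \phi_\mu(x) - L(x) = -\theta r + \bigl[(\mu - \tfrac{\alpha}{2})\theta^2 + \tfrac{\alpha}{2}\bigr]\,r^2. \]
An elementary analysis of $\theta \mapsto \theta/[(\mu-\alpha/2)\theta^2 + \alpha/2]$ on $[\alpha r,1]$ then shows that for each $\mu > 0$ there exists $\rho_\mu > 0$ with $\phi_\mu - L \leq 0$ throughout $\bar\Omega \cap \{|x-x_0| < \rho_\mu\}$. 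The supersolution condition thus forces the finite quantity $f(x_0,w(x_0),p+n)$ to dominate $2\mu\,\alpha^{n-1}$ for every $\mu>0$, which is impossible. The crux of the argument --- and the main obstacle in setting up the construction --- is the observation that uniform convexity pins the normal coordinate $s$ to order $r^2$ in the worst-case near-tangent direction, so an arbitrarily large positive normal Hessian contributes only $O(r^4)$ to $\phi_\mu - L$, which is absorbed by the strictly negative $O(r^2)$ produced by the linear downward tilt $n\cdot(x-x_0)$ combined with the mildly suppressed tangential Hessian $\alpha$.
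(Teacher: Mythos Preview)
Your argument is correct and follows essentially the same strategy as the paper: exploit uniform convexity to build a $C^2$ test function touching $w$ from below at $x_0$ whose gradient at $x_0$ is fixed ($p+n$) but whose Hessian has arbitrarily large determinant, forcing the boundary alternative $w(x_0)\geq g(x_0)$ in~\eqref{eq:MAupper}. The only substantive difference is cosmetic: the paper uses the full isotropic quadratic $\tfrac{\alpha}{2}|x-x_0|^2$ rather than your tangential term $\tfrac{\alpha}{2}|P(x-x_0)|^2$, which lets it verify $\phi\leq w$ by a clean splitting---half of the linear tilt $n\cdot(x-x_0)$ absorbs $\tfrac{\alpha}{2}|x-x_0|^2$ directly via uniform convexity, and the other half absorbs $\tfrac{\gamma}{2}(n\cdot(x-x_0))^2$ on $\{|x-x_0|<1/\gamma\}$---avoiding your case analysis in $\theta$.
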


\begin{proof}
Let $x_0\in\partial\Omega$ and suppose that there exists some $p\in\partial w(x_0)$.  Consider any supporting hyperplane to the domain at $x_0$ and let $n$ be the unit outward normal to this hyperplane.  
Since $\Omega$ is uniformly convex, there exists some constant $\alpha>0$ such that for small enough $\abs{x-x_0}$ with $x\in\bar{\Omega}$,
\[ (x-x_0)\cdot n \leq -\alpha\abs{x-x_0}^2. \]

Now we choose any $\gamma > 0$ and consider the test function
\[ \phi(x) = w(x_0) + p\cdot(x-x_0) +  (x-x_0)\cdot n + \frac{\alpha}{2}\abs{x-x_0}^2 + \frac{\gamma}{2}\left((x-x_0)\cdot n\right)^2.  \]
By the definition of $p$, we have
\[ w(x_0) + p\cdot(x-x_0) \leq w(x). \]
From the definition of $\alpha$ we know that
\[ \frac{1}{2} (x-x_0)\cdot n + \frac{\alpha}{2}\abs{x-x_0}^2 \leq 0. \]
Finally, as long as $\abs{x-x_0} < 1/\gamma$ we have
\[  \frac{1}{2} (x-x_0)\cdot n + \frac{\gamma}{2}\left((x-x_0)\cdot n\right)^2 \leq 0.\]
Putting these results together, we obtain
\[ \phi(x) \leq w(x) \]
near $x_0$, with $\phi(x_0) = w(x_0)$.  Thus $w-\phi$ has a local minimum at $x_0$.

We also note that $\phi\in C^2$ and
\begin{align*}
\nabla\phi(x_0) &= p + n,\\
D^2\phi(x_0) &= \alpha I + \gamma nn^T > 0.
\end{align*}

Then for sufficiently large $\gamma$:
\bq\label{eq:wTest}
\begin{split}
-{\det}^+(D^2\phi(x_0)) &+ f(x_0,w(x_0),\nabla\phi(x_0))\\ &= -\det(\alpha I + \gamma nn^T) + f(x_0,w(x_0),p+n)<0.
\end{split}\eq
However, since $w$ is a supersolution, we know that
\bq\label{eq:wsuper}\begin{split} F^*(x_0,w(x_0)&,\nabla\phi(x_0),D^2\phi(x_0))\\ &= \max\{-{\det}^+(D^2\phi(x_0)) + f(x_0,w(x_0),\nabla\phi(x_0)),w(x_0)-g(x_0)\} \geq 0. \end{split}\eq

The only way for both~\eqref{eq:wTest} and~\eqref{eq:wsuper} to hold is if $w(x_0) \geq g(x_0)$.
\end{proof}

\subsection{Proof of main theorem}
We are now ready to complete the proof of our main theorem by combining the observations of the previous subsection with the traditional comparison principle (Theorem~\ref{thm:comparison}).

\begin{proof}[Proof of Theorem~\ref{thm:strongcomparison}]
We recall that $v$ is an upper semi-continuous subsolution and $w$ a lower semi-continuous supersolution of the generalised Dirichlet problem for the \MA equation, while $u$ is a Lipschitz continuous viscosity solution that satisfies the Dirichlet boundary conditions in the usual sense. Then $u$ is both a sub- and supersolution to the generalised Dirichlet problem.

We know from Lemma~\ref{lem:subBC} that $v \leq g$ on $\partial\Omega$.

We now suppose that \bq\label{eq:assume}\sup\limits_{x\in\partial\Omega}\{g(x)-w(x)\}>0\eq and seek a contradiction.  To accomplish this, we use a traditional comparison principle (Theorem~\ref{thm:comparison}) to compare $u$ and $w$.   Since $u-w$ is upper semi-continuous and $u=g$ on $\partial\Omega$, we can find some $x_0\in\partial\Omega$ such that for every $x\in\bar{\Omega}$,
\bq\label{eq:compInProof} \begin{split}u(x)-w(x) &\leq \sup\limits_{x\in\partial\Omega}\{u(x)-w(x)\}^+ = \sup\limits_{x\in\partial\Omega}\{g(x)-w(x)\}^+\\ &=\sup\limits_{x\in\partial\Omega}\{g(x)-w(x)\}= g(x_0)-w(x_0)= u(x_0)-w(x_0). \end{split}\eq

Now since $u\in C^{0,1}(\bar{\Omega})$ is convex, it has a non-empty subgradient at $x_0$. That is, there exists some $p\in\R^n$ such that
\[ u(x) \geq u(x_0) + p \cdot(x-x_0) \]
for every $x\in\bar{\Omega}$.  Combining this with the result of the traditional comparison principle in~\eqref{eq:compInProof}, we find that
\[ w(x) \geq w(x_0) + u(x) - u(x_0) \geq w(x_0) + p\cdot(x-x_0) \]
for every $x\in\bar{\Omega}$. That is, $p\in\partial w(x_0)$.

Since $\partial w(x_0)$ is non-empty and $w$ is a supersolution, we have from Lemma~\ref{lem:superBC} that $w(x_0) \geq g(x_0)$.  Combining this with the definition of $x_0$ in~\eqref{eq:compInProof}, we find that
\[\sup\limits_{x\in\partial\Omega}\{g(x) - w(x)\} = g(x_0)-w(x_0) \leq 0. \]
This contradictions the assumption in~\eqref{eq:assume} and we conclude that actually $w \geq g$ on $\partial\Omega$.

We combine our observations and note that $v \leq g \leq w$ on $\partial\Omega$.  Now we use the traditional comparison principle (Theorem~\ref{thm:comparison}) one more time to compare $v$ and $g$.  This leads to the conclusion that
\[ \sup\limits_{x\in\bar{\Omega}}\{v(x)-w(x)\} \leq \sup\limits_{x\in\partial\Omega}\{v(x)-w(x)\}^+ = 0 \]
so that $v \leq w$ on $\bar{\Omega}$.
\end{proof}

\bibliographystyle{plain}
\bibliography{StrongComparison}

\begin{thebibliography}{10}

\bibitem{Bakelman}
I.~J. Bakelman.
\newblock Generalized elliptic solutions of the {D}irichlet problem for
  n-dimensional {M}onge-{A}mp\`ere equations.
\newblock In {\em Nonlinear Functional Analysis and its Applications},
  volume~45 of {\em P. Symp. Pure Math.}, pages 73--102. AMS, 1986.

\bibitem{BardiMannucci}
M.~Bardi and P.~Mannucci.
\newblock Comparison principles and {D}irichlet problem for fully nonlinear
  degenerate equations of {M}onge-{A}mp{\`e}re type.
\newblock {\em Forum Math.}, 25(6):1291--1330, 2013.

\bibitem{BSnum}
G.~Barles and P.~E. Souganidis.
\newblock Convergence of approximation schemes for fully nonlinear second order
  equations.
\newblock {\em Asymptotic Anal.}, 4(3):271--283, 1991.

\bibitem{benamou2016monotone}
J.-D. Benamou, F.~Collino, and J.-M. Mirebeau.
\newblock Monotone and consistent discretization of the {M}onge-{A}mpere
  operator.
\newblock {\em Math. Comput.}, 85(302):2743--2775, 2016.

\bibitem{CafNirSpruck}
L.~Caffarelli, L.~Nirenberg, and J.~Spruck.
\newblock The {D}irichlet problem for nonlinear second order elliptic
  equations, {III}: Functions of the eigenvalues of the {H}essian.
\newblock {\em Acta Mathematica}, 155(1):261--301, 1985.

\bibitem{Caff_regMA}
L.~A. Caffarelli.
\newblock Some regularity properties of solutions of {M}onge {A}mpere equation.
\newblock Technical report, NEW YORK UNIV NY COURANT INST OF MATHEMATICAL
  SCIENCES, 1991.

\bibitem{WanMA}
Y.-Y. Chen, J.~Wan, and J.~Lin.
\newblock Monotone mixed finite differencce scheme for {M}onge-{A}mp\'{e}re
  equations.
\newblock {\em Journal of Scientific Computing}, 76:1839--1867, 2018.

\bibitem{CIL}
M.~G. Crandall, H.~Ishii, and P.-L. Lions.
\newblock User's guide to viscosity solutions of second order partial
  differential equations.
\newblock {\em Bull. Amer. Math. Soc. (N.S.)}, 27(1):1--67, 1992.

\bibitem{feng2017convergent}
X.~Feng and M.~Jensen.
\newblock Convergent semi-{L}agrangian methods for the {M}onge-{A}mp\`ere
  equation on unstructured grids.
\newblock {\em SIAM Journal on Numerical Analysis}, 55(2):691--712, 2017.

\bibitem{feng2021narrow}
X.~Feng and T.~Lewis.
\newblock A narrow-stencil finite difference method for approximating viscosity
  solutions of {H}amilton--{J}acobi--{B}ellman equations.
\newblock {\em SIAM Journal on Numerical Analysis}, 59(2):886--924, 2021.

\bibitem{FO_MATheory}
B.~D. Froese and A.~M. Oberman.
\newblock Convergent finite difference solvers for viscosity solutions of the
  elliptic {M}onge-{A}mp\`ere equation in dimensions two and higher.
\newblock {\em SIAM J. Numer. Anal.}, 49(4):1692--1714, 2011.

\bibitem{FOFiltered}
B.~D. Froese and A.~M. Oberman.
\newblock Convergent filtered schemes for the {M}onge-{A}mp\`ere partial
  differential equation.
\newblock {\em SIAM J. Numer. Anal.}, 51(1):423--444, 2013.

\bibitem{guan_MAoptics}
P.~Guan and X.-J. Wang.
\newblock On a monge-ampere equation arising in geometric optics.
\newblock {\em J. Diff. Geom}, 48(2):205--223, 1998.

\bibitem{Hamfeldt_gaussian}
B.~F. Hamfeldt.
\newblock Convergent approximation of non-continuous surfaces of prescribed
  {G}aussian curvature.
\newblock {\em Communications on Pure \& Applied Analysis}, 17(2), 2018.

\bibitem{IshiiLions}
H.~Ishii and P.-L. Lions.
\newblock Viscosity solutions of fully nonlinear second-order elliptic partial
  differential equations.
\newblock {\em J. Diff. Eq.}, 83(1):26--78, 1990.

\bibitem{jensensmears}
M.~Jensen and I.~Smears.
\newblock On the notion of boundary conditions in comparison principles for
  viscosity solutions.
\newblock In {\em Hamilton-Jacobi-Bellman Equations}, pages 143--154. De
  Gruyter, 2018.

\bibitem{LiSalgado}
Wenbo Li and Abner~J Salgado.
\newblock Convergent, with rates, methods for normalized infinity laplace, and
  related, equations.
\newblock {\em arXiv preprint arXiv:2209.06109}, 2022.

\bibitem{Lions_Remarks}
P.-L. Lions.
\newblock Two remarks on {M}onge-{A}mpere equations.
\newblock {\em Ann. Mat. Pura Appl.}, 142(1):263--275, 1985.

\bibitem{Martino}
V.~Martino and A.~Montanari.
\newblock Lipschitz continuous viscosity solutions for a class of fully
  nonlinear equations on {L}ie groups.
\newblock {\em Journal of Geometric Analysis}, 24:169--189, 2014.

\bibitem{mirebeau2016minimal}
J.-M. Mirebeau.
\newblock Minimal stencils for discretizations of anisotropic {PDE}s preserving
  causality or the maximum principle.
\newblock {\em SIAM Journal on Numerical Analysis}, 54(3):1582--1611, 2016.

\bibitem{Nochetto_MAConverge}
R.~Nochetto, D.~Ntogkas, and W.~Zhang.
\newblock Two-scale method for the {M}onge-{A}mp{\`e}re equation: Convergence
  to the viscosity solution.
\newblock {\em Mathematics of Computation}, 2018.

\bibitem{oberman2005convergent}
A.~Oberman.
\newblock A convergent difference scheme for the infinity {L}aplacian:
  construction of absolutely minimizing {L}ipschitz extensions.
\newblock {\em Mathematics of computation}, 74(251):1217--1230, 2005.

\bibitem{ObermanCE}
A.~Oberman.
\newblock The convex envelope is the solution of a nonlinear obstacle problem.
\newblock {\em Proc. Amer. Math. Soc.}, 135(6):1689--1694, 2007.

\bibitem{oberman2006convergent}
A.~M. Oberman.
\newblock Convergent difference schemes for degenerate elliptic and parabolic
  equations: {H}amilton-{J}acobi equations and free boundary problems.
\newblock {\em SIAM Journal on Numerical Analysis}, 44(2):879--895, 2006.

\bibitem{ObermanWS}
A.~M. Oberman.
\newblock Wide stencil finite difference schemes for the elliptic
  {M}onge-{A}mp\`ere equation and functions of the eigenvalues of the
  {H}essian.
\newblock {\em Discrete Contin. Dyn. Syst. Ser. B}, 10(1):221--238, 2008.

\bibitem{oberman2013finite}
A.~M. Oberman.
\newblock Finite difference methods for the infinity {L}aplace and p-{L}aplace
  equations.
\newblock {\em Journal of Computational and Applied Mathematics}, 254:65--80,
  2013.

\bibitem{OsherSethian}
S.~Osher and J.~A. Sethian.
\newblock Fronts propagating with curvature-dependent speed: algorithms based
  on {H}amilton-{J}acobi formulations.
\newblock {\em J. Comput. Phys.}, 79(1):12--49, 1988.

\bibitem{Sapiro}
G.~Sapiro.
\newblock {\em Geometric partial differential equations and image analysis}.
\newblock Cambridge University Press, 2006.

\bibitem{TrudingerWang_MAReview}
N.~S. Trudinger and X.-J. Wang.
\newblock The {M}onge-{A}mp\`ere equation and its geometric applications.
\newblock In {\em Handbook of Geometric Analysis}, volume~7 of {\em Adv. Lect.
  Math.}, pages 467--524. Int. Press, 2008.

\bibitem{Urbas_MA}
J.~I.~E. Urbas.
\newblock The generalized {D}irichlet problem for equations of {M}onge-{A}mpere
  type.
\newblock {\em Annales de l'IHP Analyse non lin{\'e}aire}, 3(3):209--228, 1986.

\bibitem{Villani}
C.~Villani.
\newblock {\em Topics in optimal transportation}, volume~58 of {\em Graduate
  Studies in Mathematics}.
\newblock AMS, Providence, RI, 2003.

\bibitem{Yazhe_MA}
C.~Yazhe.
\newblock On degenerate {M}onge-{A}mpere equations in convex domains.
\newblock In S.-s. Chern, editor, {\em Partial Differential Equations}, pages
  61--68, Berlin, Heidelberg, 1988. Springer Berlin Heidelberg.

\end{thebibliography}

\end{document}